\def\B{\Bbb B}
\def\C{\Bbb C}
\def\D{\Bbb D}
\def\d{\delta}
\def\vp{\varphi}
\def\si{\sigma}
\def\eps{\varepsilon}
\def\eg{g^\ast}
\def\ds{\displaystyle}
\newtheorem{thm}{Theorem}
\newtheorem{proposition}[thm]{Proposition}
\newtheorem{corollary}[thm]{Corollary}
\title{Strong localization of invariant metrics}
\author{John Erik Forn{\ae}ss}
\address{J.E. Forn{\ae}ss\\Department of Mathematics, Norwegian University of
Science and Technology\\Sentralbygg 2, Alfred Getz vei 1, 7491 Trondheim, Norway}
\email{john.fornass@ntnu.no}
\author{Nikolai Nikolov}
\address{N. Nikolov\\Institute of Mathematics and Informatics\\Bulgarian Academy
of Sciences\\Acad. G. Bonchev 8, 1113 Sofia, Bulgaria
\vspace{1mm}
\newline Faculty of Information Sciences\\
State University of Library Studies and Information Technologies\\
Shipchenski prohod 69A, 1574 Sofia,
Bulgaria}
\email{nik@math.bas.bg}
\thanks{The second named author was partially supported by the Bulgarian National Science Fund,
Ministry of Education and Science of Bulgaria under contract DN 12/2. This paper was started
while the first named author was visiting the Institute of Mathematics and Informatics, Bulgarian Academy
of Sciences in September 2019.}
\subjclass[2010]{32F45}
\begin{document}

\keywords{Kobayashi, Azukawa and Sibony metrics, squeezing function}

\begin{abstract} A quantitative version of strong localization of the Kobayashi, Azukawa and Sibony
metrics, as well as of the squeezing function, near a plurisubharmonic peak boundary point of a domain
in $\C^n$ is given. As an applicatio\-n, the behavior of these metrics near a strictly pseudoconvex
boundary point is studied. A weak localization of the three metrics and the squeezing function
is also given near a plurisubharmonic antipeak boundary point.
\end{abstract}

\maketitle

\section{Introduction}

Denote by $\D\subset\C$ the unit disc. Let $D$ be an open set in $\C^n.$
The Ko\-bayashi, Azukawa and Sibony metrics of $D$ at $(z,X)\in D\times\C^n$
are defined in the following way:

$$K_D(z;X)=\inf\{|\alpha| :\exists f\in\mathcal O(\D,D),
\ f(0)=z,\ \alpha f'(0)=X\};$$

$$A_D(z;X)=\limsup_{\lambda\nrightarrow 0}\frac{\eg_D(z,z+\lambda X)}{|\lambda|},$$
where $\eg_D=\exp g_D$ and
$$g_D(z,w)=\sup\{u(w):u \in\mbox{PSH}(D),\ u<0,\ u<\log|\cdot-z|+C\}$$
is the pluricomplex Green function of $D$ with pole at $z;$

$$S_D(z;X)=\sup_v[L_v(z;X)]^{1/2},$$
where $L_v$ is the Levi form of $v$, and the supremum is taken over all log-psh functions $v$ on $D$
such that $0\le v<1,$ $v(z)=0,$ and $v$ is $C^2$ near $z$ (log=logarithm, (p)sh= (pluri)subharmonic).

It is well-known that $S_D\le A_D\le K_D.$

For various properties of these metric we refer the reader to \cite{JP}.

Denote now by $\B_n=\B_n(0,1)$ the unit ball in $\C^n.$ For any holomorphic embedding $f:D\to\B_n$
with $f(z)=0,$ set $$\sigma_D(f,z)=\sup\{r>0:r\B_n\subset f(D)\}.$$
The squeezing function of $D$ is defined by $\ds \sigma_D(z)=\sup_f\sigma_D(f,z)$
if such $f$'s exist, and $\sigma_D(z)=0$ otherwise (that is, if $D$ is not biholomorphic
to a bounded open set) -- see e.g. \cite{NT,NV} and the references therein.

Recall now that a point $p\in\partial D$ is called a psh peak point (resp. antipeak) if there exists
a psh function $\vp$ on $D$ such that $\ds\lim_{z\to p}\vp(z)=0$ and $\ds\sup_{D\setminus U}\vp<0$
(resp.~$\ds\lim_{z\to p}\vp(z)=-\infty$ and $\ds\inf_{D\setminus U}\vp>-\infty$)
for any neigh\-borhood $U$ of $p.$ Note that the notion of psh peak point has a local character,
and such a point admits a negative psh antipeak function (see the proof of \cite[Lemma 2.1.1]{Gau}).
Assuming that $\vp=\log|f|,$ where $f\in\mathcal O(D,\D),$ we define the notion of holomorphic peak point.

Strong localization of invariant metrics as in \eqref{sl} below plays crucial rule in many
boundary problems in complex analysis. Such a localization for $K_D$ near a psh peak point follows by
\cite[Proposition 2.1.a)]{Ber} (see also \cite[Lemma 2.1.1]{Gau}). The same is true for $A_D$ near a
holomorphic peak point (see \cite[Corollaries 1 \& 2]{Nik}). A quantitative version of strong localization
for $K_D$ near special holomorphic peak points is given in \cite[Theorem 2.1 \& Lemma 2.2]{FR}.

The main aim of this note is to give a quantitative version of strong localization for
$M_D\in\{K_D, A_D, S_D\}$ and $\sigma_D$ near a psh peak point in terms of the respective psh peak function.

\begin{thm}\label{thm}
Let $D$ be a domain in $\C^n.$ Suppose that there exists a psh peak function
$\vp$ for $p\in\partial D;$ $\vp$ is assumed $C^2$ near $p$ if $M_D=S_D.$
Then for any bounded neighborhood $U$ of $p$ there are
a neighborhood $V\subset U$ of $p$ and a constant $m>0$ such that for
$z\in D\cap V$ one has that
\begin{equation}\label{in}
M_D(z;X)\ge e^{m\vp(z)}M_{D\cap U}(z;X),\quad X\in\C^n,
\end{equation}
\begin{equation}\label{si}
\sigma_{D\cap U}(z)\ge e^{m\vp(z)}\sigma_D(z).
\end{equation}

In particular,

(i) since $M_{D\cap U}\ge M_D,$ then
\begin{equation}\label{sl}
\lim_{z\to p}\frac{M_D(z;X)}{M_{D\cap U}(z;X)}=1\mbox{ uniformly in }X\in(\C^n)_\ast;
\end{equation}

(ii) since $\sigma_{D\cap U}\le 1,$ then $\ds\lim_{z\to p}\sigma_D(z)=1$ implies
$\ds\lim_{z\to p}\sigma_{D\cap U}(z)=1.$
\smallskip

\noindent $(\ast)$ In addition, if $D$ is bounded, then $V\Subset U$ can be chosen arbitrary.
\end{thm}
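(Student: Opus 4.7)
The plan is to prove each estimate by transferring a near-optimal object -- a (log-)psh function, a holomorphic disc, or an embedding -- between $D\cap U$ and $D$ using $\vp$ as a weight, the factor $e^{m\vp(z)}$ arising as the value of a $\vp$-based weight at $z$. First I would fix auxiliary neighborhoods $V\Subset V_1\Subset U$ of $p$ and a constant $c>0$ with $\vp\le-c$ on $D\setminus V_1$ (possible by the peak property), shrinking $V$ so that $|\vp(z)|$ is small compared to $c$ on $D\cap V$; the constant $m$ will then depend on $c$ and on the specific construction below.

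For $S_D$ and $A_D$, which are suprema over families of (log-)psh functions on the ambient domain, I would transfer a competitor from $D\cap U$ to $D$. For $S_D$, given a competitor $v$, the product $\tilde v=v\,e^{M\vp}$ on $D\cap U$ satisfies $L_{\tilde v}(z;X)=e^{M\vp(z)}L_v(z;X)$ -- the cross-terms in the Leibniz expansion vanish because $v(z)=0$ forces $\nabla v(z)=0$ -- so the factor $e^{m\vp(z)}$ with $m=M/2$ will appear after taking square roots and suprema, provided $\tilde v$ extends to a valid competitor on all of $D$. The extension is to be carried out by gluing $v\,e^{M\vp}$ with a background log-psh function on $D$ that dominates it on $D\cap\partial V_1$, where $v\,e^{M\vp}\le e^{-Mc}$ by the peak property. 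For $A_D$ the construction is parallel but additive: the competitor $u$ for $g_{D\cap U}(z,\cdot)$ is replaced by $u+m\vp$ on $D\cap U$ and glued with a function of the form $m'\vp$ on $D\setminus V_1$; the resulting lower bound $g_D(z,w)\ge g_{D\cap U}(z,w)+m\vp(z)+o(1)$ as $w\to z$ exponentiates to \eqref{in} for $\eg_D$ and hence $A_D$.

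For the Kobayashi metric, given a disc $f:\D\to D$ with $f(0)=z$ and $\alpha f'(0)=X$, I would let $\Omega$ be the connected component of $f^{-1}(D\cap U)$ containing $0$, let $\psi:\D\to\Omega$ be a Riemann map with $\psi(0)=0$, and take $g=f\circ\psi:\D\to D\cap U$; then $\beta g'(0)=X$ with $\beta=\alpha/\psi'(0)$, so the estimate reduces to showing $|\psi'(0)|\ge e^{m\vp(z)}$. Since $|\psi'(0)|$ equals the conformal radius of $\Omega$ at $0$ (equivalently, the exponential of the Robin constant at $0$), this lower bound can be extracted from the subharmonic function $\vp\circ f$ on $\D$: it has value $\vp(z)$ at $0$ and is $\le-c$ on $\D\setminus\Omega\supset f^{-1}(D\setminus V_1)$, so the sub-mean-value inequality bounds the harmonic measure of $\D\setminus\Omega$ at $0$ by $|\vp(z)|/c$, and the Poisson-Jensen representation of the Robin constant converts this into the required bound on $|\psi'(0)|$.

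For the squeezing function, given an embedding $f:D\to\B_n$ with $f(z)=0$ and $r\B_n\subset f(D)$, I would study $\vp\circ f^{-1}$, psh on $r\B_n$, with value $\vp(z)$ at $0$ and $\le-c$ on $f(D\setminus V_1)\cap r\B_n$; the same Jensen-type estimate applied along complex lines through $0$ forces this ``bad'' set to lie outside $e^{m\vp(z)}r\B_n$, so that $f|_{D\cap U}$ is an embedding into $\B_n$ with $e^{m\vp(z)}r\B_n\subset f(D\cap U)$, proving \eqref{si}. Items (i) and (ii) of the theorem are then immediate from $\vp(z)\to 0$ as $z\to p$ together with the standing inequalities $M_{D\cap U}\ge M_D$ and $\si_{D\cap U}\le 1$, and the addendum $(\ast)$ for bounded $D$ follows by applying \eqref{in} and \eqref{si} with $U$ replaced by any open set containing $\overline V$. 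The main obstacles I anticipate are (a) executing the gluing constructions for $S_D$ and $A_D$ in a way that preserves $\tilde v(z)=0$ (respectively the log pole of $\tilde u$ at $z$), which a naive max with a bounded background log-psh function would destroy, and (b) translating the subharmonic bound on $\vp\circ f$ (respectively $\vp\circ f^{-1}$) into the precise exponential lower bound on the conformal radius of $\Omega$ (respectively on the radius of the ball embedded in $f(D\cap U)$).
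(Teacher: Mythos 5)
Your overall plan (transfer a near-optimal competitor across $D\cap U\hookrightarrow D$ and pay for it with an $e^{m\vp(z)}$-type weight) is the right intuition, but the two obstacles you flag at the end are genuine and are exactly the places where the paper has to work hardest, using tools you have not introduced. For the $K_D$ and $\si_D$ cases, the paper does not try to estimate the conformal radius of $\Omega=f^{-1}(D\cap U)_0$ via harmonic measure; instead it reduces, via Royden's lemma and the inequality $l_D(z,w)\ge\eg_D(w,z)$, to the single scalar lower bound $g_D(w,z)\ge m\vp(z)$ for $w\in D\cap\partial U$, and then proves that bound by a gluing construction. Your route through harmonic measure has a real gap: the Robin constant is $-\log|\psi'(0)|=\int_{\partial\Omega\cap\D}\log\tfrac{1}{|\xi|}\,d\omega_\Omega(0,\xi)$, and bounding this by a multiple of $\omega_\Omega(0,\partial\Omega\cap\D)$ requires $\log\tfrac{1}{|\xi|}$ to be uniformly bounded on $\partial\Omega\cap\D$, i.e.\ a uniform lower bound on $\operatorname{dist}(0,\D\setminus\Omega)$ over all competing discs, which is itself a form of the localization you are trying to prove. (Concretely, a domain $\Omega\subset\D$ obtained by excising a small set near $0$ can have $\omega_\Omega(0,\partial\Omega\cap\D)$ tending to $0$ while $|\psi'(0)|$ also tends to $0$.) The same difficulty infects your squeezing argument: $-\vp\circ f^{-1}$ is superharmonic along lines, and smallness of a nonnegative superharmonic function at the center gives no upper bound in a sub-ball, so ``the bad set lies outside $e^{m\vp(z)}r\B_n$'' does not follow from a Jensen estimate alone.

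The missing ingredient throughout is the \emph{psh antipeak function} $\theta$, which exists for any psh peak point (Gaussier's lemma) and is what the paper uses to make the gluings work globally on $D$. For $K_D$ the paper builds from $\theta$ a bounded psh function $\hat\theta$ and a global negative psh function $\psi=\chi(|\cdot-w|)\log|\cdot-w|+\hat m(\hat\theta-\sup_D\hat\theta-1)$ with a log pole at $w$, then takes $\max\{\psi,m\vp\}$ near $p$; this is the mechanism that yields $g_D(w,z)\ge m\vp(z)$. For $S_D$ the paper uses (from \cite{Nik}) a psh $\theta_z$ on $D$, bounded above, with $\theta_z(w)=\log|w-z|$ near $p$, and glues $v_z+\eps e^{2\theta_z}$ with $3\theta_z+\mathrm{const}$; this is what resolves your obstacle (a), since a plain $\max$ with a background log-psh function would indeed destroy $\tilde v(z)=0$ and the Levi form at $z$. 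Your $A_D$ argument, by contrast, is essentially the paper's: add $m\vp$ to $g_{D\cap U}(z,\cdot)$ and glue with a constant (the paper uses $ma$ with $a=\sup_{D\setminus U}\vp$, which is cleaner than gluing with $m'\vp$). So: the $A_D$ case and the reduction of (i), (ii) and $(\ast)$ are fine; the $K_D$, $S_D$ and $\si_D$ cases need the antipeak-based gluing and the $l_D\ge\eg_D$ reduction, which your sketch does not supply.
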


\noindent{\it Remark.} Theorem \ref{thm} (ii) is exactly \cite[Proposition 2]{NV}.
The inverse impli- cation cannot be true without global assump\-tions about $D;$ it is
even possible $\si_D=0$ but $\si_{D\cap U}=1.$
\smallskip

When $D$ is an unbounded domain in $\C^n$ and $p=\infty,$ we use the same definition of a
psh peak point as above (see \cite[Definition 1.4. (a)]{Gau}). Then we have the following
counterpart of Theorem \ref{thm}.

\begin{proposition}\label{infty} Let $D$ be an unbounded domain in $\C^n.$
Suppose that there exists a psh peak function $\vp$ for $p=\infty.$ Then for any neighborhood
$U$ of $\infty$ there are a neighborhood $V\subset U$ of $\infty$ and a constant $m>0$ such that
for $z\in D\cap V$ one has that
$$K_D(z;X)\ge e^{m\vp(z)}K_{D\cap U}(z;X),\quad X\in\C^n,$$
$$\sigma_{D\cap U}(z)\ge e^{m\vp(z)}\sigma_D(z).$$
\end{proposition}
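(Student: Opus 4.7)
The plan is to run the proof of Theorem~\ref{thm} essentially unchanged for $M_D=K_D$ and for $\si_D$, since the peak-point mechanism does not register whether $p$ is at finite distance or at infinity once a ``neighborhood of $\infty$'' is interpreted as the complement of a compact subset of $\C^n$. I first fix a neighborhood $U$ of $\infty$ and set $c:=-\sup_{D\setminus U}\vp$, which is positive by the psh peak hypothesis at $\infty$. Using $\ds\lim_{z\to\infty}\vp(z)=0$ together with upper semi-continuity of $\vp$, I then pick a neighborhood $V\subset U$ of $\infty$ with $\vp>-c/2$ throughout $D\cap V$; this supplies the uniform ``safety margin'' between $\vp(z)$ and $-c$ on which the argument depends.

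For the Kobayashi inequality, fix $z\in D\cap V$, $X\in\C^n$, and an analytic disc $f\in\mathcal O(\D,D)$ with $f(0)=z$ and $\alpha f'(0)=X$. The subharmonic function $\psi:=\vp\circ f$ on $\D$ satisfies $\psi\le 0$, $\psi(0)=\vp(z)$, and $\psi\le -c$ on $f^{-1}(D\setminus U)$. The one-variable subharmonic lemma underlying Theorem~\ref{thm}---a statement about subharmonic functions on $\D$ that is blind to the location of $p$---then produces a holomorphic self-map $\chi$ of $\D$ with $\chi(0)=0$, $(f\circ\chi)(\D)\subset D\cap U$, and $|\chi'(0)|\ge e^{m\vp(z)}$, for a constant $m>0$ depending only on $c$. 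Since $f\circ\chi\in\mathcal O(\D,D\cap U)$ sends $0$ to $z$ with $(\alpha/\chi'(0))(f\circ\chi)'(0)=X$, we obtain $K_{D\cap U}(z;X)\le|\alpha|/|\chi'(0)|\le e^{-m\vp(z)}|\alpha|$, and taking the infimum over $f$ yields the first inequality.

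For the squeezing inequality, let $g\colon D\to\B_n$ be a holomorphic embedding with $g(z)=0$ and $\rho\B_n\subset g(D)$ for $\rho$ arbitrarily close to $\si_D(z)$. The restriction $g|_{D\cap U}$ is a holomorphic embedding of $D\cap U$ into $\B_n$ sending $z$ to $0$, so $\si_{D\cap U}(z)\ge\sup\{s>0:g^{-1}(s\B_n)\subset U\}$. The psh function $\tilde\vp:=\vp\circ g^{-1}$ on $\rho\B_n$ plays the role of $\psi$: $\tilde\vp\le 0$, $\tilde\vp(0)=\vp(z)$, and $\tilde\vp\le -c$ on $\{w:g^{-1}(w)\notin U\}$. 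The several-variable counterpart of the same lemma, as used for $\si_D$ in Theorem~\ref{thm}, then produces $s\ge\rho\,e^{m\vp(z)}$ with $s\B_n\cap\{\tilde\vp\le -c\}=\emptyset$, and in particular $s\B_n\subset g(D\cap U)$; letting $\rho\to\si_D(z)$ yields the second inequality. The main potential obstacle is just to confirm that the (pluri)subharmonic lemmas from the proof of Theorem~\ref{thm} carry over verbatim; since they are local statements about (pluri)subharmonic functions on $\D$ and $\B_n$, no substantive change is needed, and the only genuine care lies in the interpretation of ``neighborhood of $\infty$'' and the resulting choice of $V$.
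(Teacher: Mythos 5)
The central step in your proof, for both the Kobayashi and the squeezing parts, is a purported ``one-variable subharmonic lemma'': from a subharmonic $\psi\le 0$ on $\D$ with $\psi(0)=\vp(z)>-c/2$ and $\psi\le -c$ on a relatively closed set $E\subset\D$ with $0\notin E$, you want to produce $\chi\in\mathcal O(\D,\D)$ with $\chi(0)=0$, $\chi(\D)\cap E=\emptyset$ and $|\chi'(0)|\ge e^{m\vp(z)}$ for an $m$ depending only on $c$. No such lemma appears in the proof of Theorem~\ref{thm}, and in fact the statement is false. Take $w_0\in\D$ with $|w_0|=\eps$ small and set $\psi(\lambda)=a\bigl(\log|\lambda-w_0|-\log(1+\eps)\bigr)$ with $a=\dfrac{c}{2|\log\eps|}$. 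Then $\psi\le 0$ on $\D$, $\psi(0)=a\bigl(\log\eps-\log(1+\eps)\bigr)\to -c/2$, and $\{\psi\le -c\}$ contains a small disc $E$ about $w_0$ of radius comparable to $\eps^2$. The Kobayashi metric of $\D\setminus E$ at $0$ is at least that of $\D\setminus\{w_0\}$, which is of order $\bigl(\eps\log(1/\eps)\bigr)^{-1}$; hence $\sup|\chi'(0)|\approx\eps\log(1/\eps)\to 0$, while $e^{m\vp(z)}$ stays bounded away from $0$. The underlying obstruction is that a bounded subharmonic function on $\D$ cannot detect a closed set of small capacity near $0$, so $\vp\circ f$ alone cannot force $f^{-1}(D\setminus U)$ away from the origin. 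The same objection applies to the ``several-variable counterpart'' you invoke for $\si_D$.

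What the paper actually does is different in an essential way. For $K_D$ it uses Royden's lemma to pass to the Lempert function and then to the Green function, obtaining
$K_D(z;X)\ge K_{D\cap U}(z;X)\inf_{D\cap\partial U}\eg_D(\cdot,z)$
(this is \eqref{gr}), and analogously \eqref{sq} for $\si_D$. The work then happens on $D$ itself, not on $\D$: one produces a negative psh function on $D$ with a logarithmic pole at a point $w\in D\cap\partial U$ which is bounded below by $m\vp$ on $D\cap V$, yielding $g_D(w,z)\ge m\vp(z)$. For $p=\infty$ this competitor is precisely the $\psi_z$ built in the proof of Proposition~\ref{hyper}: with $U=W_r$, $V=W_s$, and $m=\log(s+r)/(\alpha-\beta)$, one glues $\log|\cdot-w|+\tfrac{\beta}{\alpha-\beta}\log(s+r)$ near $w$ to $m\vp$ outside $|z|<s$. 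The peak function at $\infty$ enters through this \emph{global} construction on $D$, not through a pullback along analytic discs. You should replace the disc-pullback step by this Green-function estimate (or, equivalently, simply quote \eqref{gr} and \eqref{sq} and then verify $g_D(w,z)\ge m\vp(z)$ as above); the rest of your framing, including the choice of $U=W_r$ and the reduction to a single inequality feeding both conclusions, is fine.
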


Note that if $p=\infty$ is a psh peak point of $D,$ then any bounded subset of
$D$ is uniformly $M_D$-hyperbolic; more precisely:

\begin{proposition}\label{hyper} Let $p=\infty$ be a psh peak point of an unbounded domain
$D$ in $\C^n.$ Then for any $r>0$ there exists a constant $c>0$ such that
$$M_D(z;X)\ge c|X|,\quad z\in D\cap r\B_n,\ X\in\C^n.$$
\end{proposition}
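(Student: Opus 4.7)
Since $S_D\le A_D\le K_D$, it suffices to establish the lower bound for $M_D=S_D$. The plan is then to construct, for each $z_0\in D\cap r\B_n$, a log-psh test function $v$ on $D$ admissible in the supremum defining $S_D(z_0;\cdot)$ whose Levi form at $z_0$ is bounded below by $c^2|X|^2$ with $c=c(r)>0$ independent of $z_0$.

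Let $\vp$ be the psh peak function at $\infty$, normalized so that $-1\le\vp<0$ on $D$; then $\delta:=-\sup_{D\cap r\B_n}\vp>0$. As recalled in the introduction, $\infty$ also admits a negative psh antipeak function $\psi$, i.e.\ $\psi\to-\infty$ as $w\to\infty$ in $D$ while $\psi\ge-C_r>-\infty$ on $D\cap r\B_n$. I plan to take
$$v(w):=\alpha\,|w-z_0|^2\,e^{\psi(w)}$$
for a sufficiently small $\alpha=\alpha(r)>0$. Since $\log v=\log\alpha+2\log|w-z_0|+\psi(w)$ is a sum of psh functions, $v$ is log-psh on $D$; clearly $v\ge 0$ and $v(z_0)=0$, and a local mollification of $\psi$ near $z_0$ renders $v$ of class $C^2$ there. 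A direct computation of the complex Hessian at $z_0$ gives
$$L_v(z_0;X)=\alpha\,e^{\psi(z_0)}|X|^2\ge\alpha\,e^{-C_r}|X|^2=:c^2|X|^2,$$
uniformly in $z_0\in D\cap r\B_n$, so that $S_D(z_0;X)\ge c|X|$ once $v$ is known to be admissible.

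The main obstacle is choosing $\alpha$ so that $v<1$ on the whole of $D$. On a bounded portion $D\cap R\B_n$ this is automatic for $\alpha$ small; on $D\setminus R\B_n$, however, $|w-z_0|^2$ grows like $|w|^2$, and boundedness of $v$ forces $\psi(w)\le-(2+\eps)\log|w|+O(1)$ as $w\to\infty$. A generic antipeak may decay too slowly, so the critical step is to refine $\psi$ to have the required logarithmic decay. The plan here is to run the construction of \cite[Lemma 2.1.1]{Gau} directly from the peak $\vp$ (and if necessary to take a convex increasing composition, or a maximum with a psh function built from $\vp$ alone) in order to produce an antipeak with the desired decay at $\infty$. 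Once such $\psi$ is available, a single $\alpha=\alpha(r)>0$ makes $v<1$ on $D$, and the inequality on $S_D$, hence on $A_D$ and $K_D$, follows uniformly in $z_0\in D\cap r\B_n$.
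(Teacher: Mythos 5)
Your overall strategy---reduce to $S_D$ via $S_D\le A_D\le K_D$ and build, for each $z_0\in D\cap r\B_n$, a log-psh competitor that is comparable to $|w-z_0|^2$ near $z_0$ and stays below $1$ on all of $D$---is exactly the paper's strategy. But your specific ansatz $v(w)=\alpha|w-z_0|^2\,e^{\psi(w)}$ with $\psi$ an antipeak leaves a genuine gap at the step you yourself flag as critical: you need an antipeak $\psi$ with the quantitative decay $\psi(w)\le-2\log|w|+O(1)$ as $w\to\infty$, and nothing in the hypotheses, nor in the antipeak construction of \cite[Lemma 2.1.1]{Gau}, delivers that. The qualitative existence of an antipeak does not control its rate of blowup to $-\infty$; a ``convex increasing composition'' of $\vp$ (or of $\psi$) preserves plurisubharmonicity but cannot \emph{speed up} the decay of an antipeak to match the $|w|^2$ growth of the factor $|w-z_0|^2$, and it is not clear this decay is achievable from a general psh peak function. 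The attempt to obtain it ``from $\vp$ alone'' also collides with the fact that $\vp\to 0$ at $\infty$, so $e^{m\vp}\to 1$ there and gives no help against $|w-z_0|^2\to\infty$. A smaller but real issue: mollifying $\psi$ only near $z_0$ to obtain the required $C^2$ regularity of the competitor does not obviously produce a function that is still psh on all of $D$; some gluing is needed, and then the Levi form computation must be redone on the glued function.

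The paper sidesteps all of this by never asking the test function to decay at infinity. Instead of multiplying $|w-z_0|^2$ by an antipeak weight, it takes the competitor of the form $e^{2\psi_z}$, where $\psi_z$ equals $\log|w-z_0|+\text{const}$ on a fixed small ball around $z_0$ (so the competitor is smooth there and has the right Levi form), is extended by $\max\{\theta_{z_0},\eta\}$ on an intermediate annulus, and equals $\eta=\tfrac{\log(s+r)}{\alpha-\beta}\vp$ (a \emph{rescaled peak function}, not an antipeak) for $|w|\ge s$. Since $\eta<0$, the competitor $e^{2\psi_z}$ is automatically $<1$ far out; no decay at infinity is needed, only negativity. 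The constants $\alpha=\inf_{|w|>s}\vp$ and $\beta=\sup_{|w|\le r+1}\vp$ make the max-gluing consistent across the two interfaces and yield the uniform lower bound $S_D(z;X)\ge(s+r)^{\beta/(\alpha-\beta)}|X|$ for all $z\in D\cap r\B_n$. If you replace your multiplicative antipeak weight by this max-gluing with the rescaled peak, your argument closes; as written, it does not.
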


\section{Proof of Theorem \ref{thm}}

{\it The case $M_D=K_D.$} By \cite[Lemma 2]{Roy}, we have that
$$K_D(z;X)\ge K_{D\cap U}(z;X)\inf_{D\setminus U}l_D(z,\cdot),$$
where
$$l_D(z,w)=\inf\{|\alpha|: \exists f\in\mathcal O(\D,D),\ f(0)=z,\ f(\alpha)=w\}.$$
Note that
$$l_D(z,w)=l_D(w,z)\ge\eg_D(w,z)\mbox{ and }\inf_{D\setminus U}l_D(z,\cdot)=\inf_{D\cap\partial U}l_D(z,\cdot)$$
($\ge$ follows by the Schwarz lemma for log-sh functions). Hence
\begin{equation}\label{gr}
K_D(z;X)\ge K_{D\cap U}(z;X)\inf_{D\cap\partial U}\eg_D(\cdot,z).
\end{equation}

Let now $W=\B_n(p,1)$ and $\theta$ be a negative psh antipeak function for $D$ at $p.$
We may assume that $U=\B_n(p,r)$ ($r<1$) and
$$\inf_{D\setminus W}\theta\ge c=1+\sup_{D\cap\overline U}\theta$$
($\inf=c$ if $D\subset W$). Setting
$\tilde\theta=1+(1-r^2)(\theta-c),$ then
$$\hat\theta=\left\{\begin{array}{ll}
|\cdot-p|^2,&D\cap U\\
\max\{|\cdot-p|^2, \tilde\theta\},&D\cap W\setminus U\\
 \tilde\theta,&D\setminus W
\end{array}\right.$$
is a bounded psh function on $D.$

Let $\chi:[0,\infty)\to [0,1]$ be $C^\infty$ such that $\chi=1$ on
$[0,(1-r)/2]$ and $\mbox{supp }\chi\in[0,1-r].$ For any neighborhood $\hat U\Subset U$ of $p$,
we may choose first $\hat m$ and then $m$ such that
$$\psi=\chi(|\cdot-w|)\log|\cdot-w|+\hat m(\hat\theta-\sup_D\hat\theta-1)$$
and
$$\hat\vp=\left\{\begin{array}{ll}
\max\{\psi,m\vp\},&D\cap\hat U\\
\psi,&D\setminus\hat U
\end{array}\right.$$
to be psh functions on $D,$ when $w\in D\cap\partial U.$ Then
$$g_D(w,\cdot)\ge\hat\vp\ge m\vp\mbox{\ \ on\ \ }D\cap\hat U$$
which implies \eqref{in} for $M_D=K_D.$
\smallskip

{\it The case $M_D=A_D.$} Let $\ds a=\sup_{D\setminus U}\vp.$
We may choose a neighborhood $W\subset U$ of $p$ such that
$$0<c=\inf_{D\cap W}\vp-a .$$
Let $V\Subset W$ be a neighborhood of $p$ and
$$m=-c^{-1}\inf\{g_{D\cap U}(z,w):z\in D\cap V,\ w\in D\cap U\setminus W\}.$$
For $z\in D\cap V$ and $w\in D\cap U,$ set $v_z(w)=g_{D\cap U}(z,w)+m\vp(w)$ and
$$u_z=\left\{\begin{array}{ll}
v_z,&D\cap W\\
\max\{v_z,ma\},&D\cap U\setminus W\\
ma,&D\setminus U
\end{array}\right..$$
Then $u_z<0$ is psh on $D$ which implies \eqref{in} for $M_D=A_D.$
\smallskip

{\it The case $M_D=S_D.$} Since $p$ is a psh antipeak point,
an obvious modification in the proof of \cite[Theorem 1]{Nik}
(see also the construction of $\hat\theta$ above)
implies that one may find a ball $W=\B_n(p,r)$ and a constant $c>0$ such that for any
$z\in D\cap W$ there exists a psh function $\theta_z<c$ on $D$ with
$$\theta_z(w)=\log|w-z|,\quad w\in D\cap W.$$

The rest of the proof is similar to that of \cite[Lemma 5]{FL}.
We may assume that $U=\B_n(p,r/5).$ Let $\eps\in(0,r^{-2}/2],$ $z\in U$ and
$v_z$ be a competitor for $S_{D\cap U}(z;X).$
Setting
$$\tilde\theta_z=3\theta_z-3\log r+\log 9,\quad \tilde v_z=v_z+\eps e^{2\theta_z},$$
then
$$\hat v_z=\left\{\begin{array}{ll}
\max\{\tilde\theta_z,\tilde v_z\},&D\cap\B_n(z,r/2)\\
\tilde\theta_z,&D\setminus\B_n(z,r/2)
\end{array}\right..$$
is a psh function and $\hat v_z=\tilde v_z$ near $z.$

If $w\in D\cap U,$ then $|w-z|<2r/5$ and hence $\tilde\theta_z(w)<0.$ So, we may
choose a number $m$ such that $\tilde\theta_z+m\vp<0$ for any $z\in D\cap U.$
Then $(1+\eps e^{2c})^{-1}e^{\hat v_z+m\vp}$ is a competitor for $S_D(z,X)$ which implies that
$$(1+\eps e^{2c})S_D(z;X)e^{m\vp(z)}S_{D\cap U}(z;X).$$
It remains to let $\eps\to 0.$
\smallskip

{\it The case $\sigma_D.$} The proof of \cite[Proposition 2]{NV} implies that
\begin{equation}\label{sq}\sigma_{D\cap U}(z)\ge\sigma_D(z)\inf_{D\setminus U}l_D(z,\cdot)
\ge\sigma_D(z)\inf_{D\cap\partial U}\eg_D(\cdot,z).
\end{equation}
Then \eqref{si} follows as \eqref{in} in the case $M_D=K_D.$
\smallskip

{\it Finally,} to prove $(\ast)$, note that a weak localization for $M_D$ holds near any boundary point
(see e.g. \cite[Lemma 2]{Roy} or \cite[Lemma 3]{FL}, \cite[Remark]{Nik} and \cite[Lemma 5]{FL}
for $M_D=K_D,$ $M_D=A_D$ and $M_D=S_D,$ respectively; see also Proposition \ref{weak}).
Then a compactness argument provides a constant $c>0$ such that
\begin{equation}\label{wl}
M_D\ge c M_{D\cap U}\mbox{ on }(D\cap V)\times\C^n,
\end{equation}
and now $(\ast)$ easily follows.
\smallskip

\noindent{\it Remark.} The proof of the case $M_D=A_D$ implies that if $\vp<0$ is a psh function on a bounded
domain $D$ in $\C^n,$ $K$ and $L$ are disjoint compacts in $\C^n,$ and $\ds\sup_{D\cap\partial K}\vp<0,$
then there exists a constant $m>0$ such that
$$g_D(z,w)\ge m\vp(w),\quad z\in D\cap K, w\in D\cap L.$$
For example, this can be applied to any compact subset $K$ of a bounded hyperconvex domain $D$
with an exhaustion function $\vp$ (that is, $\vp\in\mbox{PSH}(D),$ $\vp<0,$
and $\ds\lim_{z\to\partial D}\vp(z)=0$).

\section{Proofs of Propositions \ref{infty} and \ref{hyper}}

\noindent{\it Proof of Proposition \ref{hyper}.} We will assume that $z,w\in D$ and $|z|\le r.$

Let $\vp$ be a psh peak function at $\infty.$ Choose $s>r+1$ such that
$$\inf_{|w|>s}\vp(w)=:\alpha>\beta:=\sup_{|w|\le r+1}\vp(w).$$
Set $\theta_z(w)=\log|w-z|+\frac{\beta}{\alpha-\beta}\log(s+r),$
$\eta=\frac{\log(s+r)}{\alpha-\beta}\vp,$
$$\psi_z(w)=\left\{\begin{array}{ll}
\theta_z(w),&|w-z|\le 1\\
\max\{\theta_z(w),\eta(w)\},&|w-z|>1,\ |w|<s\\
\eta(w),&|w|\ge s
\end{array}\right..$$
Then we may take $e^{2\psi_z}$ as a candidate in the definition of $S_D(z;X)$ which implies that
$$M_D(z;X)\ge S_D(z;X)\ge (s+r)^{\frac{\beta}{\alpha-\beta}}|X|.\qed$$

\noindent{\it Proof of Proposition \ref{infty}.} We may assume that $U=W_r:=\{z\in\C^n:|z|>r\}.$
Keeping the notations from the previous proof and setting $m=\frac{\log(s+r)}{\alpha-\beta},$
$V=W_s,$ it follows that
$$g_D(w,z)\ge m\vp(z),\quad w\in D\setminus U,\ z\in D\cap V.$$
Then \eqref{gr} and \eqref{sq} complete the proof.\qed

\section{Further results}

The next proposition is known (with a different proof) in the case, when $D$ is bounded and $M_D=K_D$
(see \cite[p.~244, Remark]{FR}).

Set $d_D=\mbox{dist}(\cdot,\partial D),$ $\d_D=-d_D$ on $D,$ and $\d=d_D$ otherwise.

Recall that a point $p\in\partial D$ is said to be strictly pseudoconvex if $\partial D$ is
$C^2$ near $p$ and $L_\d(p;X)>0$ for any $X\in T^\C_p(\partial D),$ $X\neq 0.$

\begin{proposition}\label{prop}
Let $p$ be a strictly pseudoconvex boundary point of a domain $D$ in $\C^n.$
Then for any neighborhood $U$ of $p$ there are a neighborhood $V\subset U$ of $p$
and a constant $c>0$ such that for $z\in D\cap V$ one has that
$$M_D(z;X)\ge (1-cd_D(z))M_{D\cap U}(z;X),\quad X\in\C^n,$$
$$\sigma_{D\cap U}(z)\ge (1-cd_D(z))\sigma_D(z).$$
In addition, if $D$ is bounded, then $V\Subset U$ can be chosen arbitrary.
\end{proposition}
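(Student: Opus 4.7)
The plan is to apply Theorem \ref{thm} to an appropriately chosen psh peak function $\varphi$ at $p$: the theorem then yields $M_D(z;X)\ge e^{m\varphi(z)}M_{D\cap U}(z;X)$ and its analogue for $\sigma$, so the proposition will follow from the elementary inequality $e^{-t}\ge 1-t$ once $\varphi$ is arranged to satisfy the linear lower bound $\varphi(z)\ge -Cd_D(z)$ on $D\cap V$.

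To produce such a $\varphi$, I would exploit strict pseudoconvexity. Let $\rho$ be a $C^2$ local defining function for $D$ on a neighborhood $U_0$ of $p$ that is strictly plurisubharmonic on $U_0$, so that $|\rho(z)|\asymp d_D(z)$ near $\partial D\cap U_0$. The Levi polynomial
$$L(z,p)=\sum_j\frac{\partial\rho}{\partial z_j}(p)(z-p)_j+\frac12\sum_{j,k}\frac{\partial^2\rho}{\partial z_j\partial z_k}(p)(z-p)_j(z-p)_k$$
furnishes the pluriharmonic function $\psi:=\mathrm{Re}\,L(\cdot,p)$; by Taylor's formula and strict plurisubharmonicity,
$$2\psi(z)=\rho(z)-L_\rho(p;z-p)+o(|z-p|^2)\le -c|z-p|^2$$
on $D\cap U_0$, so $\psi$ peaks at $p$ within $\overline{D\cap U_0}$. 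Choosing $a>0$ and a smaller neighborhood $W\Subset U_0$ so that $\psi<-a$ on $\overline{D\cap U_0}\setminus W$, and setting $\varphi:=\max(\psi,-a)$ on $D\cap U_0$ and $\varphi:=-a$ on $D\setminus U_0$, one obtains a psh peak function $\varphi$ on $D$, $C^2$ near $p$, that agrees with $\psi$ throughout $D\cap W$.

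Plugging $\varphi$ into Theorem \ref{thm} gives, for any neighborhood $U$ of $p$, a subneighborhood $V\subset U$ and $m>0$ such that $M_D(z;X)\ge e^{m\varphi(z)}M_{D\cap U}(z;X)$ and $\sigma_{D\cap U}(z)\ge e^{m\varphi(z)}\sigma_D(z)$ for $z\in D\cap V$. Combined with the bound $\varphi(z)\ge -Cd_D(z)$ and $e^{-t}\ge 1-t$, this yields the desired $(1-cd_D(z))$ estimates. The final assertion, allowing $V\Subset U$ arbitrary when $D$ is bounded, follows from the compactness argument of part $(\ast)$ of Theorem \ref{thm}.

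The principal technical obstacle is securing the sharp linear bound $\varphi(z)\ge -Cd_D(z)$. The Taylor expansion only gives the weaker estimate $\varphi(z)\ge -C(d_D(z)+|z-p|^2)$, and $|z-p|^2$ is not dominated by $d_D(z)$ in tangential directions. Overcoming this will likely require either refining the peak-function construction (combining $\rho$ with a correction that cancels the tangential term) or, at least in the cases $M_D=K_D$ and $\sigma_D$, bypassing the peak-function step by invoking a direct Lipschitz estimate $\inf_{D\cap\partial U}g_D(\cdot,z)\ge -Cd_D(z)$ on the pluricomplex Green function---known for strictly pseudoconvex boundary points---and plugging it into inequality \eqref{gr} and its $\sigma$-analogue \eqref{sq} from the proof of Theorem \ref{thm}.
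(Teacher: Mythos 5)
Your outline correctly isolates the strategy (feed a peak function with a linear lower bound $\vp(z)\ge -Cd_D(z)$ into the proof of Theorem~\ref{thm} and use $e^{-t}\ge 1-t$), and you have also correctly diagnosed why a single peak function anchored at $p$ cannot deliver that bound: the Levi-polynomial construction gives only $\vp(z)\ge -C\bigl(d_D(z)+|z-p|^2\bigr)$, and the tangential term $|z-p|^2$ is not $O(d_D(z))$. But you stop there, offering two possible fixes without carrying either out, so as written the argument has a genuine gap at precisely the crucial point.

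The paper closes the gap by \emph{varying the peak point}. One takes a continuous family $h(q;\cdot)$, $q\in\partial D\cap U'$, where each $h(q;\cdot)$ is a holomorphic peak function for $D\cap U'$ at $q$ (a standard Levi-polynomial construction for strictly pseudoconvex boundaries), together with the normal-direction estimate $|1-h(q;z)|\le c'd_D(z)$ for $z\in D\cap U'\cap n_q$, where $n_q$ is the inner normal at $q$. For a given $z\in D\cap V$ one chooses $q$ to be the foot of the normal from $z$, so that $z\in n_q$ and hence $\tilde\vp_q:=\log|h(q;\cdot)|$ satisfies $\tilde\vp_q(z)\ge\log(1-c'd_D(z))\ge -C\,d_D(z)$ with no tangential loss. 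Repeating the proof of Theorem~\ref{thm} with this $q$-dependent psh (indeed holomorphic) peak function, and using the continuity of $h$ in $q$ to keep the constant $m$ uniform for $q$ near $p$, yields the claimed $1-cd_D(z)$ factor. This works simultaneously for all three metrics and the squeezing function, whereas your fallback via a Lipschitz estimate on the Green function would, as you note, only cover $K_D$ and $\sigma_D$ through \eqref{gr} and \eqref{sq}. The final assertion about $V\Subset U$ when $D$ is bounded follows exactly as you say, from the compactness argument of part $(\ast)$.
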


\noindent{\it Remark.} The estimate for the squeezing function is optimal. Indeed,
let $D=\B^n$ and $U$ be a neighborhood of $p\in\partial D$ such that $D\cap U$ is not biholomorphic
to $D.$ Then $\sigma_D=1$ and, by \cite[Theorem 1.2]{DFW},
$\sigma_{D\cap U}\le 1-c'd_D$ near $p$ for some $c'>0.$

\begin{proof} It is well-known that there exist a constant $c'>0,$ a neighborhood $U'$
of $p$ and a continuous function $h$ in the closure of
$(\partial D\cap U')\times(D\cap U')$ such that for any $q\in\partial D\cap U':$

\noindent (i) $h(q;\cdot)$ is a holomorphic peak function for $D\cap U'$ at $q;$

\noindent (ii) $|1-h(q;z)|\le c'd_D(z),$ $z\in D\cap U'\cap n_q,$ where $n_q$ is the inner normal to
$\partial D$ at $q.$

Setting $\tilde\vp=\log|h|,$ it remains to repeat the proof of Theorem \ref{thm} for $q$ near $p.$
\end{proof}

\begin{corollary}\label{c1} Let $p$ be a strictly pseudoconvex point of a domain $D$ in $\C^n.$
Then there exist a neighborhood $V$ of $p$ and a constant $c>0$ such that
$$1\ge\frac{A_D(z;X)}{K_D(z;X)}\ge\frac{S_D(z;X)}{K_D(z;X)}\ge 1-cd_D(z),\quad z\in D\cap V,\ X\in\C^n.$$
\end{corollary}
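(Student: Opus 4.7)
The first two inequalities $1 \ge A_D/K_D \ge S_D/K_D$ are immediate from the hierarchy $S_D \le A_D \le K_D$ recalled in the introduction, so the only content of the corollary is the lower bound $S_D/K_D \ge 1 - cd_D$.

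My first step is to establish the general inequality $S_D(z;X)/K_D(z;X) \ge \sigma_D(z)$, valid on any domain. Given a holomorphic embedding $f\colon D \to \B_n$ with $f(z)=0$ and $r\B_n \subset f(D)$, the biholomorphism invariance of $S$ and $K$, the inclusions $r\B_n \subset f(D) \subset \B_n$, and the coincidence $S_{\B_n} = K_{\B_n}$ on the ball give
$$S_D(z;X) = S_{f(D)}(0;df(X)) \ge S_{\B_n}(0;df(X)) = K_{\B_n}(0;df(X)) = r\, K_{r\B_n}(0;df(X)) \ge r\, K_D(z;X).$$
Taking the supremum over $f$ yields $S_D/K_D \ge \sigma_D$.

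Since the corollary permits $D$ to be unbounded (where $\sigma_D$ could vanish), I would next localize by applying Proposition \ref{prop} with $M_D = S_D$ to a bounded neighborhood $U$ of $p$: for $z$ in a smaller neighborhood $V \subset U$ of $p$, $S_D(z;X) \ge (1 - c_1 d_D(z))\, S_{D\cap U}(z;X)$. Combined with the monotonicity $K_D \le K_{D\cap U}$ and the first step applied to the bounded domain $D\cap U$, this yields
$$\frac{S_D(z;X)}{K_D(z;X)} \ge (1 - c_1 d_D(z))\,\frac{S_{D\cap U}(z;X)}{K_{D\cap U}(z;X)} \ge (1 - c_1 d_D(z))\,\sigma_{D\cap U}(z).$$
To finish, I would invoke the known asymptotic $\sigma_{D_0}(z) \ge 1 - c_2 d_{D_0}(z)$ at a strictly pseudoconvex boundary point of a bounded domain $D_0$ (due to Deng--Guan--Zhang and sharpened in subsequent work). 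Since $d_{D\cap U}(z) = d_D(z)$ for $z$ sufficiently close to $p$ (the nearest boundary point then lies in $\partial D$), the product of the two factors yields $S_D/K_D \ge 1 - (c_1+c_2) d_D$ on a suitable neighborhood of $p$. The principal obstacle is this external squeezing lower bound, itself a nontrivial but well-established result at strictly pseudoconvex points; alternatively it can be produced by a direct construction of a near-identity biholomorphism at nearby boundary points via Graham-type scaling, using that after a holomorphic change of coordinates $\partial D$ osculates $\partial\B_n$ to order two at $p$.
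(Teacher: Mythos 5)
Your proposal is correct, but it takes a genuinely different route from the paper. The paper's proof is much shorter: at a strictly pseudoconvex point $p$ there is (after Narasimhan's local convexification) a bounded neighborhood $U$ such that $D\cap U$ is biholomorphic to a convex domain, so Lempert's theorem gives $K_{D\cap U}=A_{D\cap U}=S_{D\cap U}$; then one applies Proposition \ref{prop} with $M_D=S_D$ and the monotonicity $K_D\le K_{D\cap U}$ to obtain $S_D\ge(1-cd_D)S_{D\cap U}=(1-cd_D)K_{D\cap U}\ge(1-cd_D)K_D.$

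Your argument replaces the Lempert step by the clean general inequality $S_D/K_D\ge\sigma_D$ (a nice observation that is correct as you derive it), but then you must import the lower bound $\sigma_{D_0}(z)\ge 1-c\,d_{D_0}(z)$ near a strictly pseudoconvex boundary point. That estimate is indeed known (Deng--Guan--Zhang, refined in \cite{DFW} and \cite{NT}), but it is a strictly deeper input than what the paper uses, and its standard proofs themselves pass through the same local convexification plus a scaling or Lempert-type argument, so your route is somewhat circular and certainly longer. What your approach buys is that it isolates the general relation $S/K\ge\sigma$ and cleanly separates localization from the global squeezing input; what the paper's approach buys is self-containment (modulo classical Lempert) and a single constant coming directly from Proposition \ref{prop}. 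One small point worth making explicit in your version: you need the squeezing estimate in the \emph{local} form, for a bounded domain $D\cap U$ whose boundary is strictly pseudoconvex only near $p$ --- this is available (e.g.\ \cite{NT}), but should be cited in that form rather than for globally strictly pseudoconvex domains.
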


\begin{proof} There exists a bounded neighborhood $U$ of $p$ such that $D\cap U$ is biholomorphic to a
convex domain. Then Lempert's theorem implies that $K_{D\cap U}=A_{D\cap U}=S_{D\cap U}.$ It remains to apply
Proposition \ref{prop}.
\end{proof}

\begin{corollary}\label{c2} Let $\eps\in (0,1],$ $k\in\{0,1\},$ $\eps_k=\frac{k+\eps}{2},$ and $p$ be
a $C^{k+2,\eps}$-smooth strictly pseudoconvex boundary point of a domain $D$ in $\C^n.$
Then there exist a neighborhood $V$ of $p$ and a constant $c>0$ such that
\begin{multline*}
(1-Cd_D(z)^{\eps_k})\Bigl(\frac{|\langle\partial d_D(z),X\rangle|^2}{d^2_D(z)}+\frac{L(z;X)}{d_D(z)}\Bigr)^{1/2}
\le M_D(z;X) \\
\le (1+Cd_D(z)^{\eps_k})\Bigl(\frac{|\langle\partial d_D(z),X\rangle|^2}{d^2_D(z)}+\frac{L(z;X)}{d_D(z)}\Bigr)^{1/2},
\quad z\in D\cap V,\ X\in\C^n,
\end{multline*}
where $L$ is the Levi form of $-d_D.$
\end{corollary}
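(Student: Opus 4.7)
The plan is to reduce the claim to a Graham-type asymptotic for the Kobayashi metric on a strongly convex local model, using the two reductions developed in this paper to absorb the intermediate $O(d_D)$ errors.

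First, Corollary \ref{c1} gives $M_D = K_D(1 + O(d_D))$ near $p$; since $\eps_k = (k+\eps)/2 \le 1$, this error is absorbed into $1 \pm Cd_D^{\eps_k}$, so it suffices to prove the estimate for $M_D = K_D$. Next, fix a bounded neighborhood $U$ of $p$ such that $D\cap U$ is biholomorphic, via a map $\Phi$ of class $C^{k+2,\eps}$ up to the boundary near $p$, to a bounded strongly convex domain $\Omega\subset\C^n$ of the same boundary smoothness. By Proposition \ref{prop}, $K_D = K_{D\cap U}(1 + O(d_D))$ on a smaller neighborhood $V\Subset U$ of $p$, and this error is again absorbed. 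Choosing $V$ so that $\partial(D\cap U)$ coincides with $\partial D$ near $\overline{V}$, the functions $d_{D\cap U}$, $\partial d_{D\cap U}$, and $L_{D\cap U}$ coincide with $d_D$, $\partial d_D$, and $L$ on $D\cap V$. Transferring by $\Phi$, which is a boundary-regular diffeomorphism under which the defining function and Levi form transform by smooth positive factors, the claim reduces to the asymptotic
$$K_\Omega(w;Y) = \left(\frac{|\langle\partial d_\Omega(w),Y\rangle|^2}{d_\Omega^2(w)} + \frac{L_\Omega(w;Y)}{d_\Omega(w)}\right)^{1/2}\bigl(1 + O(d_\Omega(w)^{\eps_k})\bigr)$$
for $w\in\Omega$ near $\Phi(p)$.

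For this asymptotic on a strongly convex $\Omega$ of class $C^{k+2,\eps}$, I would carry out the classical osculating-ellipsoid argument. At each $q\in\partial\Omega$ near $\Phi(p)$, a unitary change of coordinates centered at $q$ combined with the Taylor polynomial of degree $k+2$ of a defining function of $\Omega$ produces inner and outer osculating ellipsoids $E_q^\pm$ with $E_q^-\subset\Omega\subset E_q^+$ on a common neighborhood of $q$, whose defining functions differ from that of $\Omega$ by $O(|w-q|^{k+2+\eps})$. For $w$ near $q=\pi_{\partial\Omega}(w)$, the tangential component of $w-q$ has size $Cd_\Omega(w)^{1/2}$ and the normal component size $Cd_\Omega(w)$, so the boundary deviation is of order $d_\Omega(w)^{(k+2+\eps)/2} = d_\Omega(w)^{1+\eps_k}$, i.e. $O(d_\Omega^{\eps_k})$ relative to the principal term of size $d_\Omega$. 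The explicit Lempert formula for $K$ on ellipsoids, combined with the monotonicity $K_{E_q^+}\le K_\Omega\le K_{E_q^-}$ on the common neighborhood, then yields the displayed two-sided bound.

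The main obstacle is the sharp tracking of the exponent in the ellipsoid comparison: the power $\eps_k=(k+\eps)/2$ reflects precisely the tangential scale $|w-q|\le Cd_\Omega(w)^{1/2}$ applied to an order-$(k+2+\eps)$ perturbation of the defining function, and one must verify that this perturbation transmits through the defining-function form of $K$ on ellipsoids only as a multiplicative factor $1 + O(d_\Omega^{\eps_k})$ rather than as a larger absolute error. All remaining steps—absorbing the $O(d_D)$ losses from strong localization, the smooth transfer through $\Phi$, and the identification $d_{D\cap U}=d_D$ near $p$—are routine.
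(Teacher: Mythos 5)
Your first reduction matches the paper's exactly: Corollary~\ref{c1} gives $M_D/K_D = 1 - O(d_D)$, and since $\eps_k\le 1$ the $O(d_D)$ error is absorbed into $1\pm Cd_D^{\eps_k}$, so it suffices to treat $M_D=K_D$. At that point, however, the paper stops: the $K_D$ case is precisely \cite[Theorem 2]{NT}, and the paper simply cites it. You instead attempt to reprove that result from scratch, via localization (Proposition~\ref{prop}), a Narasimhan-type biholomorphism to a bounded strongly convex $\Omega$, and osculating-ellipsoid comparison with explicit Lempert metrics.

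That route is in principle viable, but it leaves a genuine gap at the one place that carries all the content of \cite[Theorem 2]{NT} — and you flag it yourself as ``the main obstacle.'' The heuristic scaling count (an $O(|w-q|^{k+2+\eps})$ defect in the defining function, a tangential scale $|w-q|\lesssim d_\Omega^{1/2}$, hence a relative error $d_\Omega^{\eps_k}$) identifies the expected exponent, but it does not establish that the perturbation enters the Kobayashi metric of the osculating ellipsoid only as a multiplicative $1+O(d_\Omega^{\eps_k})$ factor, uniformly in the direction $X$ (in particular across the normal/tangential split where the principal term has two different scales, $1/d$ and $1/\sqrt{d}$). Verifying this is precisely the analytic work done in \cite[Theorem 2]{NT}; without it, your argument proves nothing sharper than the classical qualitative Graham asymptotic. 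A smaller point in the same direction: the transfer through $\Phi$ also needs a quantitative statement that the expression $\bigl(\tfrac{|\langle\partial d,X\rangle|^2}{d^2}+\tfrac{L(\cdot;X)}{d}\bigr)^{1/2}$ changes only by $1+O(d)$ under a boundary-smooth biholomorphism, which you assert but do not justify; here again $O(d)$ is harmless only because $\eps_k\le 1$, and this too needs checking rather than assertion. The clean fix is to recognize that the $K_D$ case is exactly the cited result and invoke it, as the paper does.
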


\begin{proof} Corollary \ref{c1} implies that it is enough to prove Corollary
\ref{c2} for $M_D=K_D$ which is exactly \cite[Theorem 2]{NT}.
\end{proof}

Corollary \ref{c2} remains true in the $C^{2}$-smooth case, replacing the term
$Cd_D(z)^{\eps_k}$ by any positive number.
\smallskip

Finally, we claim the following weak localization for $M_D$
which can be easily derived from the proof of Theorem \ref{thm}.

\begin{proposition}\label{weak}
Let $p$ be a psh antipeak boundary point of a domain $D$ in $\C^n.$\footnote
{In fact, we need a weaker assumption on the respective antipeak function $\vp:$
$\ds\limsup_{z\to p}\vp(z)<\inf_{D\setminus U}\vp$
for any neighborhood $U$ of $p.$} Then for any neighborhood $U$ of $p$ there
are a neighborhood $V\subset U$ of $p$ and a constant $c>0$ such that
$$M_D(z;X)\ge c M_{D\cap U}(z;X),\ \ \sigma_{D\cap U}(z)\ge c \sigma_D(z),\quad z\in D,\ X\in\C^n.$$

In particular, if $D$ is bounded, this holds for any $p\in\partial D$ and any $V\Subset U.$
\end{proposition}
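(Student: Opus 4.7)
The plan is to revisit the proof of Theorem \ref{thm} and observe that almost every step there can be carried out using only the footnote hypothesis on $\vp$; the peak function in Theorem \ref{thm} was used only to upgrade the multiplicative constant to the sharp $z$-dependent factor $e^{m\vp(z)}$, and without it one still obtains a uniform constant $c>0$, which is exactly what Proposition \ref{weak} asks for.

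First I would replay the construction from the case $M_D=K_D$ of Theorem \ref{thm}: with $\vp$ playing the role of the antipeak function $\theta$ there, the same explicit max-based formula produces a bounded psh function $\hat\theta$ on $D$ (the required separation $\sup_{D\cap U}\vp<\inf_{D\setminus W}\vp$ is guaranteed by the footnote hypothesis after possibly shrinking $U$ and enlarging $W$; note that shrinking $U$ only strengthens the desired inequality since $M_{D\cap U'}\ge M_{D\cap U}$ for $U'\subset U$). For any $w_0\in D$ the function
$$\psi_{w_0}(w)=\chi(|w-w_0|)\log|w-w_0|+\hat m\bigl(\hat\theta(w)-\sup_D\hat\theta-1\bigr)$$
is, for $\hat m$ large, a negative psh function on $D$ with the correct logarithmic pole at $w_0$. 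I would then pick $V\subset U$ small enough that for every $z\in D\cap V$ and $w\in D\cap\partial U$ the distance $|z-w|$ exceeds the support radius of $\chi$; this kills the cutoff log term at the evaluation point and leaves a quantity bounded below by a constant $-C$ depending only on $\hat\theta$ and $\hat m$. Taking $w_0=w$ and $w_0=z$ respectively yields the uniform estimates
$$g_D(w,z)\ge -C\quad\text{and}\quad g_D(z,w')\ge -C,\qquad z\in D\cap V,\ w,w'\in D\cap\partial U.$$

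For $M_D=K_D$ and for the squeezing function the conclusion follows at once from Royden's inequality \eqref{gr} and its squeezing analogue \eqref{sq}, with $c=e^{-C}$. For $M_D=A_D$ I would first derive a Green-function localization by a standard psh max-gluing: with $c_z:=\inf_{w'\in D\cap\partial U}g_D(z,w')$, the function $u=\max\{g_{D\cap U}(z,\cdot)+c_z,\,g_D(z,\cdot)\}$ on $D\cap U$, continued by $g_D(z,\cdot)$ on $D\setminus U$, is a legitimate competitor for $g_D(z,\cdot)$ (it is psh across $\partial U$ because $g_{D\cap U}(z,\cdot)+c_z\le c_z\le g_D(z,\cdot)$ on $D\cap\partial U$), so $g_D(z,w)\ge g_{D\cap U}(z,w)+c_z$ on $D\cap U$. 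Passing to $\limsup$ as $\lambda\to 0$ and invoking the second uniform estimate yields $A_D(z;X)\ge e^{c_z}A_{D\cap U}(z;X)\ge e^{-C}A_{D\cap U}(z;X)$.

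The case $M_D=S_D$ is where I expect the main obstacle. The construction of $\theta_z$ in Theorem \ref{thm} already uses only the antipeak, so the gluing of \cite[Lemma 5]{FL} can be copied almost verbatim; however, the final multiplication by $e^{m\vp}$ exploited the peak property, and I would replace it by a shifted multiple $e^{M(\hat\theta-\sup_D\hat\theta)}$, whose infimum on $D$ is strictly positive. Checking that this substitution still produces a legitimate log-psh competitor for $S_D(z;X)$ bounded by $1$ will require readjusting the scaling constant $(1+\eps e^{2c})^{-1}$ of Theorem \ref{thm} and verifying a pointwise inequality, which should be routine once $M$ is taken large enough. Finally, the bounded-domain claim is essentially free: $\vp(z)=\log|z-p|$ satisfies the footnote hypothesis at every $p\in\partial D$ whenever $D$ is bounded, and the compactness argument used for $(\ast)$ in Theorem \ref{thm} upgrades the conclusion to arbitrary $V\Subset U$.
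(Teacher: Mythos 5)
Your proposal is correct and matches the paper's intent: the paper gives no separate proof of Proposition~\ref{weak}, stating only that it ``can be easily derived from the proof of Theorem~\ref{thm},'' and your write-up is precisely the honest unwinding of that claim. For $K_D$ and $\sigma_D$ you correctly isolate that the antipeak-built function $\psi_{w_0}$ alone (without the final $\max$ against $m\vp$) already gives a uniform lower bound $g_D(w,z)\ge -C$ for $z\in D\cap V$, $w\in D\cap\partial U$, which feeds into \eqref{gr} and \eqref{sq}; and your treatment of the bounded case via $\vp=\log|\cdot-p|$ plus the compactness covering is exactly the $(\ast)$ mechanism.

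One genuine point of departure: for $A_D$ you replace the paper's construction (which in Theorem~\ref{thm} glues $g_{D\cap U}(z,\cdot)+m\vp$ using the separation of $\vp$-levels and hence leans on the peak property) by the standard Green-function localization $g_D(z,\cdot)\ge g_{D\cap U}(z,\cdot)+c_z$ via a direct $\max$-gluing with $c_z=\inf_{D\cap\partial U}g_D(z,\cdot)\ge -C$. This is cleaner and avoids having to redo the paper's level-set argument under the weaker antipeak hypothesis; it is a legitimate alternative and is in fact closer in spirit to the $K_D$ case, where the same uniform bound on $g_D$ is used. Your $S_D$ sketch correctly identifies that the construction of $\theta_z$ already only uses the antipeak and that the final factor $e^{m\vp}$ must be replaced by a uniformly bounded-below correction; since the peak factor in the paper's $S_D$ proof is anyway only used to keep the glued competitor below $1$ and to produce the factor $e^{m\vp(z)}$ at the end, replacing it by a fixed constant shift $\hat m(\hat\theta-\sup_D\hat\theta-1)$ (as in the $K_D$ case) yields the uniform constant; no new difficulty arises beyond what the paper's own $S_D$ argument already handles.
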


Note that Proposition \ref{weak} for $A_D$ is claimed in \cite[Remark, pp.~70--71]{Nik}.
\smallskip

\noindent{\bf Acknowledgement.} The authors would like to thank the referee for his/her suggestion
to consider a localization principle of invariant metrics at infinity.

\end{document}